\numberwithin{equation}{section}
\DeclareMathOperator{\spt}{spt}
\newtheorem{theorem}{Theorem}[section]
\newtheorem{lemma}[theorem]{Lemma}
\newcommand{\genlegendre}[4]{%
  \genfrac{(}{)}{}{#1}{#3}{#4}%
  \if\relax\detokenize{#2}\relax\else_{\!#2}\fi
}
\newcommand{\legendre}[3][]{\genlegendre{}{#1}{#2}{#3}}
\begin{document}
\author{Alexander E. Patkowski}
\title{On an spt function for the $4$-th symmetrized crank function}

\maketitle
\begin{abstract} In this paper we find the smallest part function related to the $4$-th symmetrized crank function, corresponding to the one obtained in Patkowski [11] for the $4$-th symmetrized rank function. This provides us with a direct relationship with Garvan's second order smallest part function. We obtain some congruences for these $spt$ functions, as well as asymptotics and inequalities. A finite $q$-series which generates an $spt$ function related to the second crank moment is also stated. This identity has an important relationship to the one obtained by Patkowski [12] related to the second rank moment.\end{abstract}

\keywords{\it Keywords: \rm partitions, $q$-series, smallest parts function}

\subjclass{ \it 2010 Mathematics Subject Classification Primary 11P81; Secondary 11P83}

\section{Introduction and Main Results}

\par Using conventional notation [9], we put $(a)_n=(a;q)_{n}:=\prod_{0\le k\le n-1}(1-aq^{k}),$ so that $(a;q)_{\infty}:=\lim_{n\rightarrow\infty}(a;q)_{n}.$ The now well-known function $\spt(n)$ of Andrews [3], is the number of appearances of the smallest parts in the number of partitions of $n.$ The motivation for its many follow-up studies is due to its relation to the second crank and rank moments, as well as the ordinary partition function $p(n),$ given by $\spt(n)=np(n)-\frac{1}{2}N_2(n).$ Recall that if $N(m,n)$ is the number of partitions of $n$ with rank $m,$ then $N_k(n)=\sum_{m\in\mathbb{Z}}m^kN(m,n).$ One of the remarkable properties this function possesses due to this relationship includes similar congruences to the partition function $p(n).$ Garvan's paper [8] generalized this relation by using the machinery of the Bailey chain (an extension of the Bailey lemma [5]), obtaining many remarkable congruences and inequalities. In [11] and [12] we derived multi-sum and finite $q$-series which also generate certain smallest part functions related to Garvan's higher order functions. In particular, we showed [11, Theorem 1.1] 
$$\sum_{n_1\ge1}\frac{q^{n_1}}{(1-q^{n_1})^2(q^{n_1+1})_{\infty}}\sum_{n_2\ge1}\frac{q^{n_2}}{(1-q^{n_2})^2(q^{n_2+1})_{n_1}}$$
\begin{equation}=\frac{1}{(q)_{\infty}}\left(\sum_{n\ge1}\frac{nq^n}{1-q^n}\right)^2+\frac{1}{(q)_{\infty}}\sum_{\substack{n\in\mathbb{Z}\\ n\neq0}}\frac{(-1)^{n}q^{n(3n+5)/2}}{(1-q^n)^4}.\end{equation}
Using results from [4], we showed this result implies the partition identity [11, Theorem 1.2]
\begin{equation}SPT^{+}(n)=\frac{5}{72}M_4(n)-\frac{1}{6}n^2p(n)+\frac{1}{36}np(n)-\eta_4(n),\end{equation} where $\eta_{k}(n)$ is the $k$-th symmetrized rank [8, pg.242]
$$\eta_k(n)=\sum_{j=-n}^{n}\binom{j+\lfloor{\frac{k-1}{2}}\rfloor}{k}N(j,n).$$
Here $SPT^{+}(n)$ counts the number of partition pairs $(\lambda,\mu)$ with weight $w(\lambda,\mu)$
where $\lambda$ is a partition, $\mu$ is a partition with the difference between its largest parts and the smallest
parts bounded by the smallest part of $\lambda,$ and $w(\lambda,\mu)$ is the product of the number of smallest parts
of $\lambda$ and the number of smallest parts of $\mu.$ 
\par The purpose of this paper is to provide the corresponding smallest part function associated with the $4$-th symmetrized crank function, and its consequences related to Garvan's 2nd order function [8, pg.243] $$\spt_2(n)=\mu_4(n)-\eta_4(n),$$ where $\mu_{k}(n)$ is the $k$-th symmetrized crank [8, pg.242]
$$\mu_k(n)=\sum_{j=-n}^{n}\binom{j+\lfloor{\frac{k-1}{2}}\rfloor}{k}M(j,n).$$ See [1] for important material on the crank on the partition.

\begin{theorem}\label{thm:ex1} We have,
$$\sum_{n_1\ge1}\frac{q^{n_1}}{(1-q^{n_1})^2(q^{n_1+1})_{\infty}}\sum_{n_2\ge1}\frac{q^{n_1n_2+n_2}}{(1-q^{n_2})^2(q^{n_2+1})_{n_1}}$$
\begin{equation}=\frac{1}{(q)_{\infty}}\left(\sum_{n\ge1}\frac{nq^n}{1-q^n}\right)^2+\frac{1}{(q)_{\infty}}\sum_{\substack{n\in\mathbb{Z}\\ n\neq0}}\frac{(-1)^{n}q^{n(n+5)/2}}{(1-q^n)^4}.\end{equation}
\end{theorem}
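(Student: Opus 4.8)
The plan is to run the proof of [11, Theorem 1.1] almost verbatim, changing the input at exactly one point. Comparing (1.3) with (1.1), the left-hand side is altered only by inserting the bilinear factor $q^{n_1n_2}$ into the inner summand, while the right-hand side is altered only in the quadratic part of the theta exponent, $n(3n+5)/2$ becoming $n(n+5)/2$; the product term $\frac{1}{(q)_\infty}\left(\sum_{n\ge1}\frac{nq^n}{1-q^n}\right)^2$ is unchanged. Writing $n(3n+5)/2=\tfrac{n(3n+1)}{2}+2n$ and $n(n+5)/2=\tfrac{n(n+1)}{2}+2n$, one sees that the common shift is $+2n$ and that the sole difference is the replacement of the pentagonal exponent $\tfrac{n(3n+1)}{2}$ by $\tfrac{n(n+1)}{2}$. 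This is precisely the rank/crank dichotomy, and it tells us exactly which two-variable generating function to substitute.

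First I would clear the factor $\frac{1}{(q)_\infty}$ from the left-hand side, using $(q^{n_1+1})_\infty=(q)_\infty/(q)_{n_1}$ and $(q^{n_2+1})_{n_1}=(q)_{n_1+n_2}/(q)_{n_2}$. This rewrites the left-hand side as $\frac{1}{(q)_\infty}$ times a double sum in which $q^{n_1n_2}$ is paired against $(q)_{n_1}(q)_{n_2}/(q)_{n_1+n_2}$, matching the prefactor on the right. In the derivation of (1.1) the inner spt-type sum is generated by a Bailey-type specialization; the appearance here of the bilinear $q^{n_1n_2}$ (rather than a purely quadratic $q^{n_2^2}$) is the signature of the crank specialization, and it is exactly the modification that steers the computation from the rank generating function toward the crank one.

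The core of the argument is the evaluation of the resulting ``$\alpha$-side'' as a theta quotient. Here I would feed in the partial-fraction expansion of the crank generating function,
$$\frac{(q)_\infty}{(zq)_\infty(q/z)_\infty}=\frac{1-z}{(q)_\infty}\sum_{n\in\mathbb{Z}}\frac{(-1)^{n}q^{n(n+1)/2}}{1-zq^n},$$
in place of the rank expansion
$$\sum_{n\ge0}\sum_{m}N(m,n)z^mq^n=\frac{1-z}{(q)_\infty}\sum_{n\in\mathbb{Z}}\frac{(-1)^{n}q^{n(3n+1)/2}}{1-zq^n}$$
that underlies (1.1). Extracting the fourth symmetrized crank moment $\mu_4$ amounts to applying the degree-four operator corresponding to the weight $\binom{j+1}{4}$ and evaluating near $z=1$; the simple poles $1/(1-zq^n)$ then combine, after folding $n\leftrightarrow -n$, into the denominator $(1-q^n)^4$, and the base exponent $\tfrac{n(n+1)}{2}$ together with the uniform $q^{2n}$ shift from the moment operator produces $\tfrac{n(n+5)}{2}$ and the sign $(-1)^n$. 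The shared product term $\frac{1}{(q)_\infty}\left(\sum\frac{nq^n}{1-q^n}\right)^2$ comes, as in (1.1), from the $z$-independent principal part of the expansion together with the well-known identity $M_2(n)=2np(n)$, and is therefore the same on both sides.

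I expect the main obstacle to be the bookkeeping in this last step: tracking how the degree-four moment operator acts on the double-pole structure of the crank expansion, and verifying that the combination of the weight $\binom{j+1}{4}$ with the symmetrization in $n$ reproduces \emph{exactly} the exponent $\tfrac{n(n+5)}{2}$, the sign $(-1)^n$, and the denominator $(1-q^n)^4$, rather than some neighbouring shift. Once the crank expansion has been substituted for the rank one, every remaining manipulation --- the reindexing, the $n\leftrightarrow -n$ fold, and the separation of the principal term --- is formally identical to that of [11], so the proof reduces to propagating this single substitution correctly through the computation.
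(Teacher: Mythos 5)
Your high-level instinct matches the paper: Theorem \ref{thm:ex1} is indeed proved by rerunning the proof of [11, Theorem 1.1] through the same $2$-fold Bailey lemma (Lemma 2.1) with exactly one input changed. But the input that changes is a \emph{$2$-fold Bailey pair}, and that is precisely the ingredient your proposal never supplies. The paper invokes Joshi and Vyas [10] for the nontrivial fact that
$$\beta_{n_1,n_2}=\frac{q^{n_1n_2}}{(q)_{n_1}(q)_{n_2}(q)_{n_1+n_2}},\qquad \alpha_{n_1,n_2}=\begin{cases}(-1)^nq^{n(n-1)/2}(1+q^n)&\text{if }n_1=n_2=n,\\ 0&\text{otherwise,}\end{cases}$$
satisfies the defining relation (2.1) with $a_1=a_2=1$. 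Your statement that the bilinear factor $q^{n_1n_2}$ ``is the signature of the crank specialization'' is exactly the claim that must be proved or cited; you arrive at it by pattern-matching the exponents on the right-hand side of (1.3), which, as a proof, is circular. Once this pair is in hand, everything else is mechanical and requires no new idea: Lemma 2.1 converts the rearranged left side into $\left(\sum_{n\ge1}\frac{nq^n}{1-q^n}\right)^2$ plus the $\alpha$-sum; the diagonal support of $\alpha$ collapses the double $\alpha$-sum to a single sum; the factor $q^{n_1+n_2}$ supplies the shift $q^{2n}$; the denominators $(1-q^{n_1})^2(1-q^{n_2})^2$ become $(1-q^n)^4$; and folding $n\leftrightarrow -n$ yields the bilateral theta quotient. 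No moment operator enters at any point.

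The machinery you propose as the ``core of the argument'' --- the partial-fraction expansion of the crank generating function together with the degree-four operator attached to the weight $\binom{j+1}{4}$ --- proves a different statement, namely Garvan's identity (2.6) expressing $\sum_{n}\mu_4(n)q^n$ as the theta quotient appearing in (1.3). That identity is what the paper uses to pass from Theorem 1.1 to the partition identity of Theorem 1.2; it says nothing about the double $q$-series on the left of (1.3), and differentiating $\frac{(q)_\infty}{(zq)_\infty(q/z)_\infty}$ at $z=1$ cannot, by itself, connect that double sum to anything --- only the Bailey pair relation (2.1) does that. Two smaller inaccuracies point to the same confusion: the $q^{2n}$ shift and the fourth-power denominator come from the structure of the Bailey lemma on the diagonal, not from the moment operator acting on simple poles; and the term $\frac{1}{(q)_\infty}\left(\sum_{n\ge1}\frac{nq^n}{1-q^n}\right)^2$ is produced by the Bailey lemma itself (it is essentially the $\alpha_{0,0}$ contribution, exactly as $\alpha_0\sum_{n\ge1}\frac{nq^n}{1-q^n}$ is in the one-fold identity (3.2)), whereas $M_2(n)=2np(n)$ plays no role in Theorem 1.1. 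So the gap is concrete: you must establish the Joshi--Vyas $2$-fold Bailey pair, and your proposal replaces that step with machinery that cannot do its job.
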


Let the sequence $SPT^{-}(n)$ be generated by the $q$-series
$$\sum_{n\ge1}SPT^{-}(n)q^n=\sum_{n_1\ge1}\frac{q^{n_1}}{(1-q^{n_1})^2(q^{n_1+1})_{\infty}}\sum_{n_2\ge1}\frac{q^{n_1n_2+n_2}}{(1-q^{n_2})^2(q^{n_2+1})_{n_1}}.$$
Let $s(\lambda)$ denote the the smallest part of the partition $\lambda.$ $SPT^{-}(n)$ counts the number of partition pairs $(\lambda,\omega)$ with weight $w(\lambda,\omega)$ where $\lambda$ is a partition, $\omega$ is a partition with the difference between its largest parts and the smallest parts bounded by the smallest part of $\lambda,$ The smallest part of $\omega$ appears at least $s(\lambda)$ times, and $w(\lambda,\omega)$ is the product of the number of smallest parts
of $\lambda,$ and the number of smallest parts of $\omega$ that appear more than $s(\lambda)$ times. 

The coefficient of $q^n$ in Theorem 1.1 then implies the following theorem.

\begin{theorem} \label{thm:ex2}For each natural number $n,$ we have,
\begin{equation}SPT^{-}(n)=\frac{5}{72}M_4(n)-\frac{1}{6}n^2p(n)+\frac{1}{36}np(n)-\mu_4(n).\end{equation}
\end{theorem}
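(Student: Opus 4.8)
The plan is to extract the coefficient of $q^n$ from the identity in Theorem~\ref{thm:ex1} and match it term-by-term against the combinatorial and moment quantities. The left side is, by definition, the generating function for $SPT^{-}(n)$, so all the work lies in interpreting the right-hand side. This is precisely the structure of the argument in [11]: Theorem~1.2 there is obtained from Theorem~1.1 by the same mechanism, the only difference being the exponent $n(3n+5)/2$ versus the present $n(n+5)/2$ and the replacement of $\eta_4$ by $\mu_4$. So the proof should be a close transcription of that derivation with the crank-specific input substituted for the rank-specific input.

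**First I would** handle the two pieces of the right-hand side separately. The term $\frac{1}{(q)_\infty}\left(\sum_{n\ge1}\frac{nq^n}{1-q^n}\right)^2$ is independent of the rank/crank distinction; expanding $\sum_{n\ge1}\frac{nq^n}{1-q^n}=\sum_{m\ge1}\sigma(m)q^m$ and dividing by $(q)_\infty$ produces a convolution of $p$ against $\sigma*\sigma$, and standard manipulations (the same ones used in [11]) rewrite this in terms of $n^2p(n)$, $np(n)$, and a piece of the fourth crank moment. **Then I would** identify the second term, $\frac{1}{(q)_\infty}\sum_{n\neq0}\frac{(-1)^nq^{n(n+5)/2}}{(1-q^n)^4}$, as the generating function carrying the $M_4(n)$ and $\mu_4(n)$ contributions. **The crucial step** is to invoke the results of [4] (exactly as cited in [11]) that give the $q$-series expansion of the symmetrized crank moments; here one needs the crank analogue—built from the generating function $\sum M(m,n)z^mq^n=(q)_\infty/((zq)_\infty(z^{-1}q)_\infty)$ and its $z$-derivatives—in place of the rank analogue that [4] supplies for $\eta_4$.

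**The main obstacle** I anticipate is verifying that the theta-like numerator $(-1)^nq^{n(n+5)/2}$ is the correct crank counterpart of the rank numerator $(-1)^nq^{n(3n+5)/2}$, i.e.\ that this particular exponent is exactly what the crank generating function produces after the same contour/partial-fraction extraction used for the rank case. Since the crank generating function $(q)_\infty/((zq)_\infty(z^{-1}q)_\infty)$ has its $z$-dependence concentrated in a single product (unlike the rank function $\sum q^{n^2}/(zq)_n(z^{-1}q)_n$, whose quadratic exponent $n^2$ is what inflates the rank numerator to $n(3n+5)/2$), the linear term $n(n+5)/2$ should emerge naturally—but confirming the precise coefficients $\frac{5}{72}$, $-\frac16$, $\frac1{36}$ survive unchanged requires carefully tracking the fourth-derivative (Taylor) coefficients of both generating functions at $z=1$. **To finish**, I would assemble the three reductions into the single linear relation and read off
\begin{equation*}
SPT^{-}(n)=\frac{5}{72}M_4(n)-\frac16 n^2p(n)+\frac1{36}np(n)-\mu_4(n),
\end{equation*}
noting that the coefficients of $M_4$, $n^2p(n)$, and $np(n)$ are forced to coincide with those in (1.2) because the only change from the rank computation is the symmetrized-moment term, which is $\eta_4$ there and $\mu_4$ here.
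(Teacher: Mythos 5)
Your proposal is correct and follows essentially the same route as the paper: read off the left side of Theorem \ref{thm:ex1} as the generating function of $SPT^{-}(n)$, reduce the squared Lambert-series term via the identity $P\Phi_1^2=\frac{5}{72}C_4-\frac{1}{6}\sum_{n\ge1}n^2p(n)q^n+\frac{1}{36}P\Phi_1$ from [11], and identify the remaining bilateral series as $-\sum_{n\ge0}\mu_4(n)q^n$. The only simplification you missed is that the ``crucial step'' you worry about requires no contour or partial-fraction work: the paper simply quotes Garvan's known generating function $\sum_{n\ge0}\mu_{2k}(n)q^n=\frac{1}{(q)_{\infty}}\sum_{n\neq0}(-1)^{n+1}q^{n(n+1)/2+kn}(1-q^n)^{-2k}$ from [8], and at $k=2$ the exponent check is just $n(n+1)/2+2n=n(n+5)/2$ (also note the second term carries only the $\mu_4$ contribution, not $M_4$, which enters entirely through the first term, as your final assembly correctly assumes).
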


Garvan [8] established that $$\spt_k(n)=\mu_{2k}(n)-\eta_{2k}(n)>0.$$ Hence putting $k=1$ and subtracting Theorem 1.2 from [11, Theorem 1.2] gives us for all natural numbers $n,$
\begin{equation} \spt_2(n)=SPT^{+}(n)-SPT^{-}(n), \end{equation}
and subsequently
\begin{equation}SPT^{+}(n)>SPT^{-}(n). \end{equation}
We will re-visit (1.6) in our last section when we look into some results on the finite $q$-series related to (1.1) and (1.3) and [12].
A natural consequence of Theorem 1.2 and known congruences for $p(n),$ $M_4(n),$ $\mu_4(n),$ and $\eta_4(n)$ is the following result.

\begin{theorem} \label{thm:ex3} For natural numbers n,
\begin{equation}SPT^{-}(7n)\equiv0\pmod{7},\end{equation}
\begin{equation}SPT^{-}(11n)\equiv0\pmod{11},\end{equation}
\begin{equation}SPT^{-}(7n+5)\equiv0\pmod{7},\end{equation}
\begin{equation}SPT^{+}(7n+5)\equiv0\pmod{7}.\end{equation}
\end{theorem}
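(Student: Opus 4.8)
The plan is to read each congruence off the closed form of Theorem~\ref{thm:ex2}, paired with the companion formula $SPT^{+}(n)=\frac5{72}M_4(n)-\frac16n^2p(n)+\frac1{36}np(n)-\eta_4(n)$ from [11]. The first point is arithmetic: the denominators $72,6,36,24$ are built from powers of $2$ and $3$, hence are units modulo $7$ and modulo $11$, so every displayed identity may be interpreted as a congruence in $\mathbb{Z}_{(7)}$ or $\mathbb{Z}_{(11)}$, and since the left sides are integers these become honest integer congruences. It is convenient to first remove $\mu_4$ and $\eta_4$ in favour of moments. Crank symmetry $M(j,n)=M(-j,n)$ kills the odd crank moments and gives $\mu_4(n)=\frac1{24}(M_4(n)-M_2(n))=\frac1{24}(M_4(n)-2np(n))$; substituting into Theorem~\ref{thm:ex2} collapses it to the clean form
\[ SPT^{-}(n)=\frac1{36}M_4(n)-\frac16n^2p(n)+\frac19np(n). \]

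I would then treat the three $SPT^{-}$ congruences by a uniform reduction. Putting $n\mapsto 7n$ in (1.7) and $n\mapsto 11n$ in (1.8), the two $p$-terms acquire explicit factors $(7n)^2,7n$ and $(11n)^2,11n$, so they lie in $7\mathbb{Z}_{(7)}$ and $11\mathbb{Z}_{(11)}$ respectively; hence $SPT^{-}(7n)\equiv\frac1{36}M_4(7n)\pmod7$ and $SPT^{-}(11n)\equiv\frac1{36}M_4(11n)\pmod{11}$. For (1.9) the same two terms vanish for a different reason, namely Ramanujan's $p(7n+5)\equiv0\pmod7$, leaving $SPT^{-}(7n+5)\equiv\frac1{36}M_4(7n+5)\pmod7$. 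Thus all of (1.7)--(1.9) reduce to fourth crank-moment congruences, $M_4(7n)\equiv M_4(7n+5)\equiv0\pmod7$ and $M_4(11n)\equiv0\pmod{11}$, which are among the known moment congruences recorded in [8].

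For (1.10) I would argue directly on $SPT^{+}$. Writing $\eta_4(n)=\frac1{24}(N_4(n)-N_2(n))$ with $N_2(n)=2np(n)-2\,\spt(n)$, on the progression $7n+5$ the $p$-terms again die by Ramanujan and $N_2(7n+5)$ dies by Andrews' congruence $\spt(7n+5)\equiv0\pmod7$, so (1.10) reduces to $\frac5{72}M_4(7n+5)-\frac1{24}N_4(7n+5)\equiv0\pmod7$. This now follows from $M_4(7n+5)\equiv0$, already used for (1.9), together with the companion rank-moment congruence $N_4(7n+5)\equiv0\pmod7$ (equivalently $\eta_4(7n+5)\equiv0\pmod7$). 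A shorter alternative is to combine (1.9) with the decomposition (1.5), $SPT^{+}(n)-SPT^{-}(n)=\spt_2(n)$: if one may invoke Garvan's $\spt_2(7n+5)\equiv0\pmod7$, then (1.10) is immediate from (1.9).

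In all four cases the reduction itself is routine bookkeeping of the modular inverses of $72,36,24,6$ and of the vanishing polynomial factors; the genuine content is imported from the moment congruences $M_4(7n),M_4(7n+5),N_4(7n+5)\equiv0\pmod7$ and $M_4(11n)\equiv0\pmod{11}$. I expect the main obstacle to be confirming these on exactly the progressions that arise here: they are not elementary, resting on the quasimodularity of $\sum_nM_4(n)q^n$ and $\sum_nN_4(n)q^n$ and their Ramanujan-type $7$- and $11$-dissections, so I would either cite the precise statements in [8] (and Atkin--Garvan) or, if the progressions do not match verbatim, re-derive the relevant dissections modulo $7$ and $11$.
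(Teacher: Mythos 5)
Your reductions are all arithmetically correct: the collapsed formula $SPT^{-}(n)=\frac{1}{36}M_4(n)-\frac16 n^2p(n)+\frac19 np(n)$ follows from Theorem \ref{thm:ex2} together with $\mu_4(n)=\frac{1}{24}(M_4(n)-M_2(n))$ and $M_2(n)=2np(n)$, and your treatment of (1.9) is essentially identical to the paper's proof: the paper also kills the $p$-terms via Ramanujan's congruence (in the guise of $M_2(7n+5)\equiv0\pmod 7$) and then invokes $M_4(7n+5)\equiv0\pmod 7$ from [8, pg.\ 256]. Likewise your ``shorter alternative'' for (1.10) --- combine (1.9) with (1.5) and Garvan's $\spt_2$ congruence modulo $7$ --- is exactly the paper's argument.

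The divergence, and the weak point, is in (1.7) and (1.8). The paper never touches $M_4(7n)$ or $M_4(11n)$: it obtains $SPT^{-}(7n)\equiv0\pmod 7$ and $SPT^{-}(11n)\equiv0\pmod{11}$ purely by subtraction, writing $SPT^{-}=SPT^{+}-\spt_2$ via (1.5) and importing the already-published congruences for $SPT^{+}$ from [11, Theorem 1.3] and for $\spt_2$ from [8, Theorem 6.1, eqs.\ (6.2)--(6.3)]. Your route instead reduces (1.7)--(1.8) to $M_4(7n)\equiv0\pmod7$ and $M_4(11n)\equiv0\pmod{11}$, and this is where you should be careful: these progressions are \emph{not} the Ramanujan progressions $7n+5$ and $11n+6$, so the crank-equidistribution argument that underlies the cited $M_4(7n+5)$ congruence in [8] does not apply, and such congruences are not what the present paper (or its quoted excerpt of [8]) records. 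Indeed, given your reduction, $M_4(7n)\equiv0\pmod7$ is \emph{equivalent} to (1.7), so as written your proof of (1.7)--(1.8) is conditional on a citation you have not confirmed; you flagged this honestly, but it means the genuinely hard content is still open in your write-up, whereas the paper's subtraction trick discharges it by leaning on two prior theorems. (The moment congruences you need do appear to be true --- e.g.\ $M_4(7)=6762=7\cdot966$, $M_4(11)=71456=11\cdot6496$ --- and can be proved by the quasimodular/dissection arguments you sketch, but that would be a new proof, not bookkeeping; alternatively, note that the paper's own Theorem \ref{thm:ex3} plus your reduction would \emph{derive} these crank-moment congruences as corollaries, which is a sign they are not a free input.)
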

\begin{proof} By [8, Theorem 6.1, eq.(6.2),eq.(6.3)], and [11, Theorem 1.3] we obtain (1.7) and (1.8) from the relationship (1.5). Note from [8, pg.251], 

\begin{equation} \mu_4(n)=\frac{1}{4!}\sum_{m=-n}^{n}(m^2-1)m^2M(m,n)=\frac{1}{4!}(M_4(n)-M_2(n)).\end{equation}

From [8, pg.256],
\begin{equation}M_4(7n+5)\equiv M_2(7n+5)\equiv0\pmod{7}.\end{equation}
Hence from (1.11), (1.12), $M_2(n)=2np(n),$ and Theorem 1.2 we obtain (1.9). Likewise, using (1.9) with [8, Theorem 6.1, eq.(6.2)] and (1.5) gives (1.10). 

\end{proof}
In a recent paper by C. Williams [13], many elegant congruences for Garvan's higher order smallest part function were established. Let $\legendre{-n}{l}=1$ for a prime $l\ge5,$ then Williams proved that [13, Theorem 1.1] for $m\ge1,$
\begin{equation} \spt_2\left(\frac{l^{2m}n+1}{24}\right)\equiv0\pmod{l^{m}}.\end{equation} This result is reminiscent of a known result on the partition function [13, eq.(1.1)]
\begin{equation} p\left(\frac{l^{2m}n+1}{24}\right)\equiv0\pmod{l^{2m}}.\end{equation} (Here we hold the convention that $p(n)$ and $\spt(n)$ are zero for $n\notin\mathbb{N}.$) Utilizing (1.13) and related results from Williams [13] we arrive at the following general result.
\begin{theorem} Let $\legendre{-n}{l}=1$ for a prime $l\ge5,$ then
\begin{equation} SPT^{-}\left(\frac{l^{2m}n+1}{24}\right)\equiv0\pmod{l^{m}},\end{equation}
\begin{equation}SPT^{+}\left(\frac{l^{2m}n+1}{24}\right)\equiv0\pmod{l^{m}}.\end{equation}
\end{theorem}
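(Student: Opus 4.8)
The plan is to reduce both congruences to a single fourth-symmetrized-crank congruence coming from Williams [13], together with the deep partition congruence (1.14). The first step is to turn Theorem 1.2 into a clean integer identity. Writing (1.11) as $M_4(n)=24\mu_4(n)+M_2(n)$ and using $M_2(n)=2np(n)$, I would substitute into (1.4); the $M_4$ contribution then partly cancels the explicit $\mu_4$ term and the fractional coefficients clear, leaving
\[6\,SPT^{-}(n)=4\mu_4(n)+n(1-n)\,p(n).\]
Since $SPT^{-}(n)$, $\mu_4(n)$, $p(n)$ are integers this is an identity in $\mathbb{Z}$, and because $l\ge5$ the factor $6$ is invertible modulo $l^{m}$.

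Next I would specialize to $n^{*}=(l^{2m}n+1)/24$, which is a positive integer under the hypothesis $\legendre{-n}{l}=1$, so the identity applies. By (1.14) we have $p(n^{*})\equiv0\pmod{l^{2m}}$, so the term $n^{*}(1-n^{*})p(n^{*})$ is annihilated already modulo $l^{2m}$, a fortiori modulo $l^{m}$. It therefore remains to control $\mu_4(n^{*})$, and here I invoke the companion statement of Williams [13], namely $\mu_4(n^{*})\equiv0\pmod{l^{m}}$ (equivalently $M_4(n^{*})\equiv0\pmod{l^{m}}$, since $M_4(n^{*})-24\mu_4(n^{*})=M_2(n^{*})=2n^{*}p(n^{*})\equiv0\pmod{l^{2m}}$). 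Combining these, $6\,SPT^{-}(n^{*})\equiv0\pmod{l^{m}}$, and cancelling the unit $6$ gives (1.15).

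For (1.16) no further moment input is needed: relation (1.5) reads $SPT^{+}(n)=SPT^{-}(n)+\spt_2(n)$, and at $n^{*}$ the first term vanishes modulo $l^{m}$ by (1.15) just proved while the second vanishes modulo $l^{m}$ by Williams' Theorem 1.1, i.e. (1.13). Adding the two yields (1.16).

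The genuine content is entirely the symmetrized-crank congruence $\mu_4(n^{*})\equiv0\pmod{l^{m}}$; the rest is the algebraic collapse to $6\,SPT^{-}=4\mu_4+n(1-n)p$ and the observation that the $p(n)$-terms are killed by the stronger modulus $l^{2m}$ in (1.14). Thus the main obstacle is not analytic but bibliographic and algebraic: one must extract the correct individual $\mu_4$-congruence from Williams [13] (if it is not stated in exactly that form, it can be recovered by combining his $\spt_2$ result (1.13) with his congruence for the symmetrized rank $\eta_4$, via $\mu_4=\spt_2+\eta_4$), and then keep careful track of the units $6$ and $24$ and of which modulus, $l^{m}$ or $l^{2m}$, each ingredient supplies.
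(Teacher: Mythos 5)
Your proof is correct and follows essentially the same route as the paper: kill the $p$-terms with (1.14), reduce everything to a congruence for the fourth symmetrized crank at $n^{*}=(l^{2m}n+1)/24$, then pass from $SPT^{-}$ to $SPT^{+}$ via (1.5) and Williams's congruence (1.13). Your integer identity $6\,SPT^{-}(n)=4\mu_4(n)+n(1-n)p(n)$ (which checks out against Theorem 1.2, (1.11) and $M_2(n)=2np(n)$) is a genuine streamlining: the paper keeps $M_4$ and $\mu_4$ separate and therefore needs two moment congruences, (1.19) and (1.20), whereas you need only the one for $\mu_4$. The one place where your argument is thinner than the paper's is exactly the step you flagged as ``bibliographic'': the vanishing $\mu_4(n^{*})\equiv0\pmod{l^{m}}$ is not quoted from Williams as a ready-made statement. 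What [13, pg.~337] provides --- and what the paper records as (1.17)--(1.18) --- are the congruences
\begin{equation*}
4^2\cdot5\cdot M_4(n^{*})\equiv p(n^{*})\pmod{l^{2m}},\qquad 2^7\cdot3^2\cdot5\cdot\mu_4(n^{*})\equiv -17\,p(n^{*})\pmod{l^{2m}}.
\end{equation*}
Combining the second with (1.14) gives $2^7\cdot3^2\cdot5\cdot\mu_4(n^{*})\equiv0\pmod{l^{2m}}$, and one must then divide out the constant: for $l\ge7$ it is a unit modulo $l^{2m}$ and you even get the stronger modulus $l^{2m}$, but for $l=5$ the factor $5$ absorbs one power of $l$, leaving only $\mu_4(n^{*})\equiv0\pmod{l^{2m-1}}$, hence $\pmod{l^{m}}$ --- this is the paper's small division lemma and is precisely why the theorem is stated modulo $l^{m}$ rather than $l^{2m}$. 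So replace your appeal to a ``companion statement'' (and the speculative fallback through $\eta_4$, which is unnecessary and would require an $\eta_4$-congruence you have not verified exists) by this two-line derivation, and your proof is complete.
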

\begin{proof} From [13, pg.337] we have
\begin{equation} 4^2\times5\times M_4\left(\frac{l^{2m}n+1}{24}\right)\equiv p\left(\frac{l^{2m}n+1}{24}\right)\pmod{l^{2m}},\end{equation}
and
\begin{equation} 2^7\times3^2\times5\times \mu_4\left(\frac{l^{2m}n+1}{24}\right)\equiv -17p\left(\frac{l^{2m}n+1}{24}\right)\pmod{l^{2m}}.\end{equation}
If $la\equiv b\pmod{l^{2m}},$ and $b\equiv0\pmod{l^{2m}},$ then $a\equiv0\pmod{l^{m}}.$ To see this note that $la\equiv b\pmod{l^{2m}},$ implies $la=b+n_1 l^{2m},$ for an integer $n_1.$ Further $b\equiv0\pmod{l^{2m}},$ implies $la=b+n_1 l^{2m}=n_2 l^{2m}+n_1 l^{2m},$ or equivalently $a=(n_2+n_1) l^{2m-1}=(n_2+n_1)l^{m-1} l^{m}\equiv0\pmod{l^m}.$

Hence, (1.17) and (1.18) with (1.14) imply for a prime $l\ge5,$
\begin{equation}  M_4\left(\frac{l^{2m}n+1}{24}\right)\equiv 0\pmod{l^{m}},\end{equation}
and
\begin{equation}  \mu_4\left(\frac{l^{2m}n+1}{24}\right)\equiv 0\pmod{l^{m}}.\end{equation}
Now applying (1.14), (1.19) and (1.20) with Theorem 1.2 gives (1.15). Williams's result (1.13) with (1.15) gives (1.16).
\end{proof}

In Bringmann and Mahlburg [7, Theorem 1.2, (i)] it was proved that
\begin{equation}\frac{1}{2}M_{2k}(n)\sim \frac{1}{2}N_{2k}(n)\sim \gamma_{2k} n^{k-1}e^{\pi\sqrt{\frac{2n}{3}}}, \end{equation}
as $n\rightarrow\infty,$ for $k\ge1,$
and $$\gamma_{2k}=(2k)!\zeta(2k)(1-2^{1-2k})\frac{6^{k}}{4\sqrt{3}\pi^{2k}}. $$ Here as usual $\zeta(k)$ is the Riemann zeta function. We also used the common notation $a_n\sim b_n$ to mean the ratio $a_n/b_n$ tends to $1$ as $n\rightarrow\infty.$  It should be noted the asymptotic formula (1.21) is very similar to the known one for the partition function [7, eq.(1.15)]

\begin{equation}p(n)\sim \frac{1}{4\sqrt{3}n}e^{\pi\sqrt{\frac{2n}{3}}}, \end{equation}
as $n\rightarrow\infty.$ In fact, (1.22) can be obtained from (1.21) when $k=1.$ The formulas for $SPT^{\pm}(n)$ in conjunction with (1.21) suggests the growth as $n\rightarrow\infty$ would be the same due to the fourth moments. This is presented in the following result.
\begin{theorem} We have,
\begin{equation}SPT^{+}(n)\sim SPT^{-}(n)\sim \frac{\sqrt{3}}{40}ne^{\pi\sqrt{\frac{2n}{3}}}, \end{equation}
as $n\rightarrow\infty.$
\end{theorem}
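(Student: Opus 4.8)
The plan is to read off the leading asymptotics directly from the closed forms (1.2) and (1.4), inserting the moment asymptotics (1.21) and the partition asymptotic (1.22). Write $E_n:=e^{\pi\sqrt{2n/3}}$ for brevity. The first task is to separate the terms that contribute to leading order from those that do not. Since $p(n)\sim\frac{1}{4\sqrt3\,n}E_n$, the term $\frac16 n^2 p(n)$ is of order $n\,E_n$, while $\frac{1}{36}np(n)$ is only of order $E_n$ and hence negligible. By (1.21) with $k=2$ one has $M_4(n)\sim 2\gamma_4\,n\,E_n$, again of order $n\,E_n$, whereas $M_2(n)=2np(n)$ and $N_2(n)$ are of order $E_n$ and drop out. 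Thus the leading behaviour of both $SPT^{\pm}(n)$ is governed solely by $\frac{5}{72}M_4(n)$, $\frac16 n^2 p(n)$, and the symmetrized functions $\mu_4(n),\eta_4(n)$.

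The second task is to pin down the asymptotics of $\mu_4$ and $\eta_4$. Garvan's identity (1.11) gives $\mu_4(n)=\frac{1}{24}\bigl(M_4(n)-M_2(n)\bigr)$, and repeating the computation with the rank in place of the crank — using the symmetry $N(j,n)=N(-j,n)$ to kill the odd powers of $j$ — yields $\eta_4(n)=\frac{1}{24}\bigl(N_4(n)-N_2(n)\bigr)$. Since (1.21) supplies $N_4(n)\sim M_4(n)\sim 2\gamma_4\,n\,E_n$ while $M_2,N_2=O(E_n)$, both symmetrized functions have the same leading term $\mu_4(n)\sim\eta_4(n)\sim\frac{\gamma_4}{12}\,n\,E_n$. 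This already makes it plain that $SPT^{+}(n)$ and $SPT^{-}(n)$ agree to leading order; it is also consistent with (1.5)--(1.6), whereby their difference is the positive but lower-order quantity $\spt_2(n)=\mu_4(n)-\eta_4(n)=o(n\,E_n)$.

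The final task is to collect the constant. From $\zeta(4)=\pi^4/90$ the formula for $\gamma_{2k}$ gives $\gamma_4=\frac{21}{10\sqrt3}$, so $M_4(n)\sim\frac{21}{5\sqrt3}\,n\,E_n$. Hence $\frac{5}{72}M_4(n)\sim\frac{7}{24\sqrt3}\,n\,E_n$, $\frac16 n^2 p(n)\sim\frac{1}{24\sqrt3}\,n\,E_n$, and $\mu_4(n)\sim\eta_4(n)\sim\frac{7}{40\sqrt3}\,n\,E_n$. Substituting into (1.2) and (1.4) with the correct signs, the common leading coefficient is
$$\frac{7}{24\sqrt3}-\frac{1}{24\sqrt3}-\frac{7}{40\sqrt3}=\frac{1}{4\sqrt3}-\frac{7}{40\sqrt3}=\frac{3}{40\sqrt3}=\frac{\sqrt3}{40},$$
which is exactly (1.23).

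I expect the main obstacle to be the partial cancellation among the three surviving $n\,E_n$ terms: the positive contribution of $\frac{5}{72}M_4(n)$ is substantially offset by the negative contributions of $\frac16 n^2 p(n)$ and $\mu_4(n)$ (resp. $\eta_4(n)$), so the whole result rests on tracking the constants accurately enough to confirm that the residual coefficient $\frac{\sqrt3}{40}$ is nonzero and positive; a sign or factor error anywhere collapses the leading term. A secondary point requiring care is the verification that every discarded piece — $\frac{1}{36}np(n)$, $M_2(n)$, $N_2(n)$, and the difference $\mu_4(n)-\eta_4(n)$ — is genuinely $o(n\,E_n)$, each of which follows at once from (1.21) and (1.22).
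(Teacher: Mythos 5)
Your proposal is correct and follows essentially the same route as the paper: substitute the Bringmann--Mahlburg asymptotics (1.21) and the partition asymptotic (1.22) into the closed forms (1.2) and (1.4), discard the lower-order terms $\frac{1}{36}np(n)$, $M_2(n)$, $N_2(n)$, use $\mu_4(n)=\frac{1}{24}(M_4(n)-M_2(n))$ and $\eta_4(n)=\frac{1}{24}(N_4(n)-N_2(n))$, and simplify the constants. The only difference is that you carry out the constant computation $\gamma_4=\frac{21}{10\sqrt{3}}$ explicitly, which the paper leaves as ``some simplification.''
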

\begin{proof} First note that (1.21) tells us the growth of $M_4(n)$ and $N_4(n)$ dominates the growth of $M_2(n)$ and $N_2(n).$ Hence by (1.21), (1.22), Theorem 1.2 tells us 
\begin{equation}\begin{aligned}SPT^{-}(n)&\sim\frac{5}{72}M_4(n)-\frac{1}{6}n^2p(n)-\frac{1}{4!}M_4(n)\\
&\sim \left(\frac{5}{72}\times 2\gamma_4-\frac{1}{24\sqrt{3}}-\frac{2}{24}\gamma_4 \right)ne^{\pi\sqrt{\frac{2n}{3}}}.\end{aligned}\end{equation}
Similarly using $\eta_4(n)=(N_4(n)-N_2(n))/4!$ gives us the same asymptotic for $SPT^{+}(n).$ After some simplification of the constants gives the result (1.23).
\end{proof}

\section{Proofs of Theorems 1.1 and 1.2}

To prove our $q$-series identities analytically, we will use a special case of differentiating each variable of the $2$-fold Bailey lemma given in [11] relative to $a_1=a_2=1.$

\begin{lemma}
({\bf [2, Theorem 1],[11]}) \it Define a pair of sequences $(\alpha_{n_1, n_2},\beta_{n_1, n_2})$ to be a $2$-fold Bailey pair with respect to $a_j,$ $j=1,2,$ if 
		\begin{equation}\beta_{n_1, n_2}=\sum_{r_1\ge0}^{n_1}\sum_{r_2\ge0}^{n_2} \frac{\alpha_{r_1,r_2}}{(a_1q;q)_{n_1+r_1} (q;q)_{n_1-r_1}(a_2q;q)_{n_2+r_2} (q;q)_{n_2-r_2}}.\end{equation}
For $a_1=a_2=1,$ we have
\begin{equation}\sum_{n_1\ge1}\sum_{n_2\ge1}(q)_{n_1-1}^2(q)_{n_2-1}^2\beta_{n_1, n_2}q^{n_1+n_2}=\left(\sum_{n\ge1}\frac{nq^n}{1-q^n}\right)^2+\sum_{n_1\ge1}\sum_{n_2\ge1}\frac{\alpha_{n_1, n_2}q^{n_1+n_2}}{(1-q^{n_1})^2(1-q^{n_2})^2}.\end{equation} 
\end{lemma}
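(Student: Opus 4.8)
The plan is to read off the asserted identity as the $a_1=a_2=1$ specialization of the general two-fold Bailey lemma of [2, Theorem 1] (as employed in [11]), the essential point being that this parameter value is reached not by naive substitution but by the differentiation device announced at the start of the section. Concretely, I would carry out the computation directly from the defining relation (2.1): set $a_1=a_2=1$ there, insert the resulting expression for $\beta_{n_1,n_2}$ into the left-hand side, and exploit the fact that the summation kernel factorizes across the two indices. Interchanging the order of summation — legitimate at the level of formal power series in $q$, since each power of $q$ receives only finitely many contributions — collapses the left-hand side into
$$\sum_{r_1\ge0}\sum_{r_2\ge0}\alpha_{r_1,r_2}\,G(r_1)\,G(r_2),\qquad G(r):=\sum_{n\ge\max(1,r)}\frac{(q)_{n-1}^2\,q^n}{(q)_{n+r}\,(q)_{n-r}},$$
so that everything reduces to evaluating the single-index kernel sum $G(r)$.

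The key steps are then the two evaluations of $G(r)$. First, for $r\ge1$ I would show $G(r)=\dfrac{q^r}{(1-q^r)^2}$; after cancelling $(q)_{n-1}^2$ against $(q)_{n+r}(q)_{n-r}$ this is a telescoping sum, as one checks directly when $r=1$ from the partial fraction $\frac{q^n}{(1-q^n)(1-q^{n+1})}=\frac{1}{1-q}\left(\frac{1}{1-q^n}-\frac{1}{1-q^{n+1}}\right)$, the general $r$ following from the analogous antidifference. Second, for $r=0$ I would use the Lambert-series identity $G(0)=\sum_{n\ge1}\frac{q^n}{(1-q^n)^2}=\sum_{n\ge1}\frac{nq^n}{1-q^n}$, both sides being the divisor generating function $\sum_{N\ge1}\sigma_1(N)q^N$. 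This last evaluation is exactly the finite part that the $a_j\to1$ differentiation extracts, and it is the source of the factor $\sum_{n\ge1}\frac{nq^n}{1-q^n}$. Collecting terms, the diagonal block $r_1=r_2=0$ contributes $\left(\sum_{n\ge1}\frac{nq^n}{1-q^n}\right)^2$, the block $r_1,r_2\ge1$ contributes the stated double sum, and the cross terms with exactly one index zero drop out under the normalization $\alpha_{0,0}=1$, $\alpha_{r,0}=\alpha_{0,r}=0$ ($r\ge1$) of the pairs to which the lemma is applied.

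The main obstacle is the passage to $a_1=a_2=1$. In the differentiation approach of [11] this parameter value is singular — the relevant limiting contributions carry poles there, which is precisely why one differentiates — so the diagonal term $\left(\sum nq^n/(1-q^n)\right)^2$ is genuinely a regularized limit rather than a substitution, and making it rigorous amounts to justifying the interchange of the operators $\partial_{a_1},\partial_{a_2}$ (equivalently, of the $a_j\to1$ limits) with the infinite summations. In the direct route above this difficulty is re-expressed, harmlessly, as the singular behaviour of the closed form $G(r)=q^r/(1-q^r)^2$ at $r=0$, whose correct replacement is the convergent Lambert series $G(0)$. The remaining genuinely computational point is the closed evaluation of $G(r)$ for general $r$ by telescoping; everything else is bookkeeping of the four index-blocks.
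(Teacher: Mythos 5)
Your argument is correct, but it takes a genuinely different route from the paper's: the paper gives no internal proof of Lemma 2.1 at all, quoting it from [2, Theorem 1] and [11], where the identity is obtained by differentiating the general $2$-fold Bailey lemma in each parameter $a_1,a_2$ and then setting $a_1=a_2=1$ --- the very ``differentiation device'' you mention but deliberately avoid. Your direct verification (substitute the $a_1=a_2=1$ case of the defining relation, interchange the formally legitimate summations, and factor the kernel as $G(r_1)G(r_2)$ with $G(0)=\sum_{n\ge1}q^n/(1-q^n)^2=\sum_{n\ge1}nq^n/(1-q^n)$ and $G(r)=q^r/(1-q^r)^2$ for $r\ge1$) is more elementary and self-contained, and it surfaces a point the paper's statement silently elides: equation (2.2) as written is only valid under the normalization $\alpha_{0,0}=1$, $\alpha_{r,0}=\alpha_{0,r}=0$ for $r\ge1$ (otherwise the right side must carry $\alpha_{0,0}\Phi_1^2$ plus two cross sums), which your four-block bookkeeping makes explicit and which the diagonal Joshi--Vyas pair (2.3) actually used in the paper does satisfy. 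The one step you leave at the level of a claim is the evaluation of $G(r)$ for general $r\ge1$: the $r=1$ partial-fraction telescoping does not extend verbatim, but the evaluation is a one-line consequence of the $q$-Gauss summation. Indeed, writing $n=r+m$ gives
\begin{equation*}
G(r)=q^{r}\,\frac{(q)_{r-1}^{2}}{(q)_{2r}}\sum_{m\ge0}\frac{(q^{r};q)_{m}^{2}}{(q^{2r+1};q)_{m}(q;q)_{m}}\,q^{m},
\end{equation*}
and $q$-Gauss with $a=b=q^{r}$, $c=q^{2r+1}$ (so the argument is $c/ab=q$) evaluates the sum as $(q^{r+1};q)_{\infty}^{2}/\bigl((q^{2r+1};q)_{\infty}(q;q)_{\infty}\bigr)$; since $(q)_{r-1}(q^{r+1};q)_{\infty}=(q)_{\infty}/(1-q^{r})$ and $(q)_{2r}(q^{2r+1};q)_{\infty}=(q)_{\infty}$, everything collapses to $q^{r}/(1-q^{r})^{2}$, confirming your claim (and matching the $1$-fold identity (3.2), of which your kernel evaluation is precisely the content). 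In exchange for this extra computation, your approach buys a citation-free proof of exactly the specialization used in the paper, while the differentiation route of [2] and [11] buys uniformity: it produces the identity for general $a_1,a_2$ and the whole family of such derived identities at one stroke.
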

\rm

\begin{proof}[Proof of Theorem \ref{thm:ex1}] Joshi and Vyas [10] established that $(\alpha_{n_1, n_2},\beta_{n_1, n_2})$ is a $2$-fold Bailey pair with respect to $a_j=1,$ $j=1,2,$ where $\alpha_{0, 0}=1,$ where

\begin{equation}\alpha_{n_1, n_2}=\begin{cases} (-1)^nq^{n(n-1)/2}(1+q^n),& \text {if } n_1=n_2=n,\\ 0, & \text{otherwise,} \end{cases}\end{equation}
and 

\begin{equation}\beta_{n_1, n_2}=\frac{q^{n_1n_2}}{(q)_{n_1}(q)_{n_2}(q)_{n_1+n_2}}.\end{equation}
Inserting this $2$-fold Bailey pair (2.3)--(2.4) into Lemma 2.1 and then multiplying through by $(q)_{\infty}^{-1},$ gives
$$\frac{1}{(q)_{\infty}}\sum_{n_1,n_2\ge1}\frac{(q)_{n_1-1}^2(q)_{n_2-1}^2q^{n_1+n_2+n_1n_2}}{(q)_{n_1}(q)_{n_2}(q)_{n_1+n_2}}=\sum_{n_1\ge1}\frac{q^{n_1}}{(1-q^{n_1})^2(q^{n_1+1})_{\infty}}\sum_{n_2\ge1}\frac{q^{n_1n_2+n_2}}{(1-q^{n_2})^2(q^{n_2+1})_{n_1}}.$$
After a little rearrangement we obtain the stated identity.
\end{proof}

\begin{proof}[Proof of Theorem \ref{thm:ex2}]The first sum in Theorem 1.1 is the generating function for $SPT^{-}(n).$ The first product times sum on the right hand side follows from the identity established in [11]
  \begin{equation}P\Phi_1^2=\frac{5}{72}C_4-\frac{1}{6}\sum_{n\ge1}n^2p(n)q^n+\frac{1}{36}P\Phi_1,\end{equation}
  where
  $$\Phi_i:=\sum_{n\ge1}\frac{n^iq^n}{1-q^n}.$$ The coefficient of $q^n$ in $C_4$ is $M_4(n).$
The last $q$-series of Theorem 1.1 is the $k=2$ instance of the definition [8]
\begin{equation}\sum_{n\ge0}\mu_{2k}(n)q^n=\frac{1}{(q)_{\infty}}\sum_{\substack{n\in\mathbb{Z}\\ n\neq0}}\frac{(-1)^{n+1}q^{n(n+1)/2+kn}}{(1-q^n)^{2k}}.\end{equation}
Equating coefficients of $q^n$ of Theorem 1.1 gives the partition identity.
\end{proof}

\section{The finite $\spt$ functions }
Here we obtain the finite $\spt$ function for the 2nd crank moment, corresponding to the finite $\spt$ function for the 2nd rank moment obtained in [12]. Specifically, the generating function from [12, Corollary 4] is $spt^{*}_M(n),$ the number of smallest parts of the number of partitions of $n$ where parts greater than the smallest plus $M$ do not occur. We will utilize the $1$-fold Bailey lemma created in [5]. A pair of sequences $(\alpha_n,\beta_n)$ is referred to as a Bailey pair [4] with respect to $a$ if
\begin{equation}\beta_n=\sum_{0\le j\le n}\frac{\alpha_j}{(q;q)_{n-j}(aq;q)_{n+j}}.\end{equation}
From [12, eq.(2.5)] we know that if $(\alpha_n,\beta_n)$ is a Bailey pair relative to $a=1,$ then 
\begin{equation}\sum_{n\ge1}(q)_{n-1}^2\beta_nq^n=\alpha_0\sum_{n\ge1}\frac{nq^n}{1-q^n}+\sum_{n\ge1}\frac{\alpha_nq^n}{(1-q^n)^2}. \end{equation}

\begin{theorem} We have, for each natural number $M,$
\begin{align}&\sum_{n_1\ge1}\frac{q^{nM+n}}{(1-q^{n})^2(1-q^{n+1})\cdots(1-q^{n+M})}\\ &\quad =\frac{1}{(q)_{M}}\sum_{n\ge1}\frac{nq^n}{1-q^n}+(q)_{M}\sum_{n\ge1}\frac{(-1)^nq^{n(n+1)/2}(1+q^n)}{(q)_{M-n}(q)_{M+n}(1-q^n)^2}.\notag \end{align}
\end{theorem}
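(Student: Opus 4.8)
The plan is to apply the $1$-fold Bailey lemma in the form (3.2) to a \emph{finite} Bailey pair relative to $a=1$, exactly paralleling the two-variable argument used to prove Theorem \ref{thm:ex1}. The right-hand side of the claimed identity has the unmistakable shape of (3.2): the term $\alpha_0\sum_{n\ge1} nq^n/(1-q^n)$ with $\alpha_0=1$ scaled by $1/(q)_M$, and the $\alpha$-sum scaled by $(q)_M$ with summand involving $(-1)^nq^{n(n+1)/2}(1+q^n)$. This strongly suggests I should use a finite analogue of the Bailey pair (2.3)--(2.4) of Joshi--Vyas, namely the classical unit Bailey pair whose $\alpha$-sequence is $\alpha_n=(-1)^nq^{n(n-1)/2}(1+q^n)$ for $n\ge1$ and $\alpha_0=1$. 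The key structural point is that the factors of $(q)_M$ and $1/(q)_M$ on the right are not produced by the infinite lemma (3.2) directly; they must enter through the \emph{finite} version of the $\beta$-definition (3.1).

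First I would identify the correct Bailey pair. Taking $\alpha_n=(-1)^nq^{n(n-1)/2}(1+q^n)$ (this is the one-dimensional shadow of (2.3), and the standard ``$a=1$'' unit Bailey pair), I would compute the corresponding $\beta_n$ from (3.1). The resulting $\beta_n$ for $a=1$ is the well-known closed form $\beta_n=\delta_{n,0}/(q)_0$ type expression — more precisely the unit pair gives $\beta_n$ with a single surviving term, yielding $(q)_{n-1}^2\beta_n$ equal to a clean monomial. Substituting into the left side of (3.2) reproduces the $\Phi_1$-squared-free analogue. However, to obtain the \emph{truncated} left-hand side $\sum q^{nM+n}/\big((1-q^n)^2(1-q^{n+1})\cdots(1-q^{n+M})\big)$, I would instead insert a finitized pair: replace the infinite products implicit in the lemma by their order-$M$ truncations, which is legitimate because (3.2) is derived term-by-term and the Bailey relation (3.1) holds at each finite level $n$. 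The telescoping product $(1-q^{n+1})\cdots(1-q^{n+M})=(q^{n+1})_M$ and the prefactor $q^{nM}$ are precisely what a degree-$M$ truncation of the summation index contributes.

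The main obstacle will be verifying that the finite truncation is carried out consistently on both sides so that the identity is an exact equality for every $M$, not merely asymptotically as $M\to\infty$ (which would just recover the $k=1$ case of Theorem \ref{thm:ex1}). Concretely, I must check that after restricting the inner Bailey sum to $0\le j\le M$ and applying (3.2), the left side collapses to the single sum displayed in (3.3) with the exponent $q^{nM+n}$ and the denominator $(q^{n+1})_M$, while the right side acquires exactly the factors $(q)_M^{\pm1}$ shown. This is a bookkeeping matter of tracking how the finite $q$-Pochhammer symbols $(q)_{M-n}$ and $(q)_{M+n}$ arise from $(q;q)_{n-j}(q;q)_{n+j}$ at the truncation boundary, and confirming that terms with $n>M$ vanish because $(q)_{M-n}^{-1}=0$ in that range. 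I expect the algebra to reduce, after a short rearrangement of $q$-Pochhammer factors exactly as in the last line of the proof of Theorem \ref{thm:ex1}, to the stated closed form.
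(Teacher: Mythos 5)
There is a genuine gap: you never identify, let alone verify, the Bailey pair that actually produces (3.3), and the mechanism you propose in its place (a ``finitization'' of the lemma) is not a valid operation. The pair you name — the unit pair $\alpha_n=(-1)^nq^{n(n-1)/2}(1+q^n)$, $\beta_n=\delta_{n,0}$ — when inserted into (3.2) yields only the $M$-free identity
\[
0=\sum_{n\ge1}\frac{nq^n}{1-q^n}+\sum_{n\ge1}\frac{(-1)^nq^{n(n+1)/2}(1+q^n)}{(1-q^n)^2},
\]
a known Uchimura-type result, and no amount of bookkeeping turns it into (3.3). Moreover, the factors $(q)_{M-n}(q)_{M+n}$ on the right of (3.3) are not boundary effects of a truncated version of the definition (3.1); they are part of the $\alpha$-sequence itself. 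The pair the paper uses is
\[
\alpha_n=\frac{(q)_{M}^2(-1)^nq^{n(n-1)/2}(1+q^n)}{(q)_{M-n}(q)_{M+n}}\ (1\le n\le M),\quad \alpha_0=1,\quad \alpha_n=0\ (n>M),
\qquad
\beta_n=\frac{q^{nM}(q)_M}{(q)_{n+M}(q)_n},
\]
and the prefactors $1/(q)_M$ and $(q)_M$ in (3.3) then come simply from dividing (3.2) by the $(q)_M$ sitting inside $\beta_n$, against the $(q)_M^2$ sitting inside $\alpha_n$ — not from any alteration of the lemma.

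The second, related problem is your justification that truncation is ``legitimate because (3.2) is derived term-by-term and the Bailey relation (3.1) holds at each finite level $n$.'' Identity (3.2) holds for genuine Bailey pairs; replacing infinite products inside it by order-$M$ truncations destroys the equality, and there is no general principle that rescues this. The real work — which your proposal defers to ``bookkeeping'' — is proving that the displayed $(\alpha_n,\beta_n)$ is in fact a Bailey pair relative to $a=1$. The paper does this by specializing Heine's transformation ($a,b\to0$, $c=q^{M+1}$) to get the coefficient identity (3.6), and then combining (3.6) with the known finite Bailey pair of [12, Lemma 3] through the Bailey chain step $\alpha_n\mapsto q^{n^2}\alpha_n$, $\beta_n\mapsto\sum_{k}q^{k^2}\beta_k/(q)_{n-k}$, together with the uniqueness of the $\alpha$-sequence attached to a given $\beta$-sequence. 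Some such $q$-series summation argument (Heine, or a terminating well-poised evaluation computing $\beta_n$ from the $\alpha_n$ above) is unavoidable here, and it is precisely the step missing from your proposal.
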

\begin{proof} From [6, pg.15, eq.(6.1)], for $|t|<1,$
\begin{equation}\sum_{n\ge0}\frac{(a)_n(b)_nt^n}{(c)_n(q)_n}=\frac{(abt/c)_{\infty}}{(t)_{\infty}}\sum_{n\ge0}\frac{(c/a)_n(c/b)_n\left(\frac{abt}{c}\right)^n}{(c)_n(q)_n}. \end{equation}
If we let $a, b\rightarrow 0,$ $c=q^{M+1},$ in (3.4) and simplify we obtain
\begin{equation}\sum_{n\ge0}\frac{(q)_M t^n}{(q)_{M+n}(q)_n}=\frac{(q)_{M}}{(t)_{\infty}}\sum_{n\ge0}\frac{q^{n^2+Mn}t^n}{(q)_{n+M}(q)_n}. \end{equation}
The coefficient of $t^n$ of (3.5) is 
\begin{equation}\frac{1}{(q)_{M+n}(q)_n}=\sum_{k\ge0}\frac{q^{k^2+Mk}}{(q)_{n-k}(q)_{k+M}(q)_k}. \end{equation}
Hence applying (3.6) with [12, Lemma 3] and the uniqueness of Bailey pairs with Bailey's lemma [12,Theorem 3.2, $\rho_1\rightarrow\infty,\rho_2\rightarrow\infty$]
$$\alpha_n=q^{n^2}\alpha'_n,$$
$$\beta_n=\sum_{k\ge0}^{n}\frac{q^{k^2}}{(q)_{n-k}}\beta'_k,$$
we see that $(\alpha_n,\beta_n)$ is a Bailey pair relative to $a=1$ where,
\begin{equation}\alpha_n=\frac{(q)_{M}^2(-1)^nq^{n(n-1)/2}(1+q^n)}{(q)_{M-n}(q)_{M+n}}, \end{equation}
for $1\le n\le M,$ $\alpha_n=0$ for $n>M,$ $\alpha_0=1,$ and
\begin{equation}\beta_n=\frac{q^{nM}(q)_M}{(q)_{n+M}(q)_n}. \end{equation}
Applying the Bailey pair (3.7)--(3.8) to (3.2) gives the theorem.
\end{proof}

The generating function on the left side of (3.3) may be observed to be related to the $\omega$ component of the partition pair in Theorem 1.2. It generates $spt^{-}_M(n),$ the number of smallest parts that appear more than $M$ times, of the number of partitions of $n$ where parts greater than the smallest plus $M$ do not occur, and the smallest part appears at least $M$ times. The set of partitions in which the smallest part appears at least $M$ times, largest part bounded by the smallest plus $M$ is a subset of partitions in which the largest part is bounded by the smallest plus $M.$ Furthermore, the number of smallest parts that appear more than $M$ times is less than the total number of smallest parts that appear in any partition. Hence, combinatorially we have shown the result.
\begin{theorem} For natuaral numbers $n,M>0,$
\begin{equation}spt^{*}_M(n)>spt^{-}_M(n).\end{equation}
\end{theorem}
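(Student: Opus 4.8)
The plan is to argue combinatorially by comparing the two weighted counts partition by partition, following the classes described just before the statement. First I would fix notation for the two families of partitions of $n$ that are involved: let $\mathcal{A}$ denote the partitions whose largest part does not exceed the smallest part plus $M$, and let $\mathcal{B}\subseteq\mathcal{A}$ denote those members of $\mathcal{A}$ in which the smallest part additionally occurs at least $M$ times. By definition $spt^{*}_M(n)$ is the sum over $\pi\in\mathcal{A}$ of the multiplicity $k(\pi)$ of the smallest part of $\pi$, whereas $spt^{-}_M(n)$ is the sum over $\pi\in\mathcal{B}$ of $k(\pi)$ when $k(\pi)>M$ and of $0$ when $k(\pi)=M$. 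Writing both functions explicitly as such sums is the whole point of the setup, and the inclusion $\mathcal{B}\subseteq\mathcal{A}$ is immediate from the defining conditions.

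The second step is a term-by-term comparison of the summands over the common index set $\mathcal{B}$. For every $\pi\in\mathcal{B}$ the contribution to $spt^{-}_M(n)$ is either $k(\pi)$ (when $k(\pi)>M$) or $0$ (when $k(\pi)=M$), and in both cases it is at most the contribution $k(\pi)$ of the same $\pi$ to $spt^{*}_M(n)$. Splitting $\sum_{\pi\in\mathcal{A}}k(\pi)=\sum_{\pi\in\mathcal{B}}k(\pi)+\sum_{\pi\in\mathcal{A}\setminus\mathcal{B}}k(\pi)$, the first sum already dominates $spt^{-}_M(n)$ by the pointwise bound, and the second sum is nonnegative since each $k(\pi)\ge1$. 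Summing the inequalities therefore yields $spt^{*}_M(n)\ge spt^{-}_M(n)$.

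To upgrade this to the strict inequality I would exhibit a single partition counted by the left side but contributing nothing to the right side. The one-part partition $\pi=(n)$ works for every $M\ge1$: its largest and smallest parts both equal $n\le n+M$, so $\pi\in\mathcal{A}$ and it contributes $k(\pi)=1$ to $spt^{*}_M(n)$, while its smallest part occurs only once and so contributes $0$ to $spt^{-}_M(n)$ (either because $1<M$ forces $\pi\notin\mathcal{B}$, or because when $M=1$ a single occurrence does not exceed $M$). Since this partition supplies strictly positive slack in the estimate of the previous paragraph, uniformly in $n\ge1$ and $M\ge1$, we conclude $spt^{*}_M(n)>spt^{-}_M(n)$.

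The step I would watch most carefully is the bookkeeping of the two weights rather than any deep combinatorics: one must keep straight that $spt^{*}_M$ counts \emph{all} occurrences of the smallest part over the larger class $\mathcal{A}$, whereas $spt^{-}_M$ counts occurrences only when they exceed $M$ and only over the smaller class $\mathcal{B}$. Once the inclusion $\mathcal{B}\subseteq\mathcal{A}$ and the pointwise weight inequality are pinned down precisely, the result is immediate, and the partition $(n)$ guarantees strictness in all remaining edge cases, in particular for small $n$ and for $M=1$.
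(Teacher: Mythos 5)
Your argument is, in outline, exactly the paper's own proof: the partitions supporting $spt^{-}_M(n)$ (largest part at most smallest plus $M$, smallest part appearing at least $M$ times) form a subset of those supporting $spt^{*}_M(n)$ (largest part at most smallest plus $M$), and the weight attached to each such partition on the right-hand side is dominated by its weight on the left-hand side. Your explicit treatment of strictness via the one-part partition $(n)$ is actually more careful than the paper's one-line dismissal, since it covers the cases where the smaller class $\mathcal{B}$ is empty or contributes nothing.

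The one point you should fix is your reading of the weight defining $spt^{-}_M(n)$. From the generating function in (3.3), the relevant factor is
\begin{equation*}
\frac{q^{n(M+1)}}{(1-q^{n})^{2}}=\sum_{k\ge M+1}(k-M)\,q^{nk},
\end{equation*}
so a partition whose smallest part has multiplicity $k$ is counted with weight $k-M$ (the number of occurrences of the smallest part beyond the first $M$ of them), not with weight $k$ when $k>M$ and $0$ when $k=M$ as you state; this is the same convention as the weight ``number of smallest parts of $\omega$ that appear more than $s(\lambda)$ times'' in the definition of $SPT^{-}(n)$. The slip happens to be harmless in direction: your misread weight dominates the true one ($k-M\le k$ for $k>M$, and both equal $0$ for $k=M$), so the inequality you prove is formally stronger and implies the theorem. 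With the correct weight the pointwise comparison is even cleaner, since $k-M\le k-1<k$ for every partition in $\mathcal{B}$, and the partition $(n)$ is then needed only to handle the case where $\mathcal{B}$ contributes nothing. But as written your proof bounds a quantity that is not $spt^{-}_M(n)$, so you should either correct the weight or add the one-line observation that your version dominates the paper's.
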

It may be seen from the component $q^{nM}$ in the numerator of the generating function for $spt^{-}_M(n)$ that $\lim_{M\rightarrow\infty}spt^{-}_M(n)=0.$ Combinatorially, there are clearly no such partitions for which the smallest part appears an infinite number of times. Hence a direct corollary tells us that $\lim_{M\rightarrow\infty}spt^{+}_M(n)=\spt(n)>0.$ Another interesting consequence of Theorem 3.2 is equation (1.6), which again can follow from our combinatorial argument.

1390 Bumps River Rd. \\*
Centerville, MA
02632 \\*
USA \\*
ul. A. E. Ody\'{n}ca 47 \\*
02-606 Warsaw\\*
Poland\\*
E-mail: alexpatk@hotmail.com, alexepatkowski@gmail.com
\end{document}